\newtheorem{defin}{}
\newtheorem{saetze}[defin]{}
\newtheorem{conjec}[defin]{}
\newtheorem{lemmas}[defin]{}
\newtheorem{folger}[defin]{}
\newtheorem{bemerk}[defin]{}
\newenvironment{theorem}  {\begin{saetze}\it {\bf Theorem:}}{\end{saetze}}
\newenvironment{lemma}    {\begin{lemmas}\it {\bf Lemma:}}{\end{lemmas}}
\newenvironment{remark}   {\begin{bemerk}\it {\bf Remark:}}{\end{bemerk}}
\newenvironment{proof}    {{\it Proof}:}{{\hfill \fillbox \bigskip}}
\newcommand{\fillbox}{\mbox{$\bullet$}}
\newcommand{\ra}{\rightarrow}
\newcommand{\ms}{\mapsto}
\newcommand{\N}{\mathbb N}
\newcommand{\F}{\mathbb F}
\newcommand{\A}{\mathcal A}
\renewcommand{\a}{\mathcal A}
\newcommand{\M}{\mathcal M}
\DeclareMathOperator{\Ann}{Ann}
\DeclareMathOperator{\nilrad}{nilrad}
\newenvironment{items}{\begin{list}{$\alph{item})$}
{\labelwidth18pt \leftmargin18pt \topsep3pt \itemsep1pt \parsep0pt}}
{\end{list}}
\newcommand{\bulit}{\item[$\bullet$]}
\begin{document}

\title{The isomorphism problem for graded algebras\\
       and its application to mod-$p$ cohomology rings\\ of small $p$-groups }
\author{Bettina Eick and Simon King}
\date{\today}
\maketitle

\begin{abstract}
The mod-$p$ cohomology ring of a non-trivial finite $p$-group is an infinite
dimensional, finitely presented graded unital algebra over the field with
$p$ elements, with generators in positive degrees. We describe an effective
algorithm to test if two such algebras are graded isomorphic. As application, 
we determine all graded isomorphisms between the mod-$p$ cohomology rings of 
all $p$-groups of order at most 100.
\end{abstract}

\section{Introduction}

The mod-$p$ cohomology ring $H^*(G, \F)$ of a non-trivial finite $p$-group 
$G$ and the field $\F$ with $p$ elements is an infinite dimensional graded
$\F$-algebra. It is an interesting and wide open question how good this
algebra is as an isomorphism invariant for the underlying group $G$. More
precisely: given two non-isomorphic $p$-groups $G$ and $H$, under which
circumstances are $H^*(G, \F)$ and $H^*(H, \F)$ isomorphic as graded
$\F$-algebras?

Our aims in this paper are two-fold. First, we consider finitely presented 
graded unital $\F$-algebras with generators in positive degrees over a finite
field $\F$; we call such algebras `finitary'. We describe an effective 
algorithm to test if two finitary algebras are graded isomorphic. We also
consider the special case of graded commutative finitary algebras and 
describe an improved algorithm for this case. 

Secondly, we apply our algorithm to the mod-$p$ cohomology rings of 
the $p$-groups with order at most $100$. The $p$-groups of order at most
$100$ are well-known, see \cite{BEO02} for a history on their classification. 
Finite presentations for their mod-$p$ cohomology rings are also available,
see \cite{Car03} and also \cite{GKi11} for a recent account.  The following
theorem exhibits a brief summary of our results. A complete list of the groups
with graded isomorphic cohomology ring is included in Section \ref{results}
below.

\begin{theorem}
The following tables list numbers of isomorphism types of groups of
order $p^n$ and numbers of graded isomorphism types of the associated 
mod-$p$ cohomology rings.
\begin{center}
\begin{tabular}{|c||c|c|c|c|c|c||c|c|c|c|}
\hline
order & 
   $2^1$ & $2^2$ & $2^3$ & $2^4$ & $2^5$ & $2^6$ & 
   $3^1$ & $3^2$ & $3^3$ & $3^4$ \\
\hline
\# groups & 1 & 2 & 5 & 14 & 51 & 267 & 1 & 2 & 5 & 15 \\
\# rings  & 1 & 2 & 5 & 14 & 48 & 239 & 1 & 2 & 5 & 15 \\
\hline
\end{tabular}
\end{center}
There are significantly more graded isomorphisms between groups of different
orders than between groups of a fixed order. In the following table we list 
numbers of isomorphism types of groups of order {\em dividing} $p^n$ and 
numbers of graded isomorphism types of the associated mod-$p$ cohomology 
rings.
\begin{center}
\begin{tabular}{|c||c|c|c|c|c|c||c|c|c|c|}
\hline
order & 
   $2^1$ & $2^2$ & $2^3$ & $2^4$ & $2^5$ & $2^6$ & 
   $3^1$ & $3^2$ & $3^3$ & $3^4$ \\
\hline
\# groups & 1 & 3 & 8 & 22 & 73 & 340 & 1 & 3 & 8 & 23 \\
\# rings  & 1 & 3 & 7 & 18 & 55 & 260 & 1 & 2 & 5 & 14 \\
\hline
\end{tabular}
\end{center}
\end{theorem}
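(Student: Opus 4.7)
The plan is to apply the isomorphism-testing algorithm promised in the introduction to a concrete list of inputs, and then to count the resulting equivalence classes under graded isomorphism. I would not recompute any of the cohomology presentations; rather, I would import the finite presentations of $H^*(G,\F)$ produced by Carlson \cite{Car03} and refined by Green--King \cite{GKi11}, together with the classification of $p$-groups of order at most $100$ from \cite{BEO02}. This gives, for each order $p^n$ appearing in the tables, a uniformly indexed family of finitary $\F$-algebras on which the algorithm of the paper can be run as a black box.

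Next, for the first table, I would form the equivalence relation of graded isomorphism on each fixed-order family by incremental partitioning: maintain a list of class representatives, and for each new cohomology ring test it against the existing representatives, opening a new class on the first time all tests return ``not isomorphic'' and merging it into an existing class on the first ``isomorphic''. The entry ``\# rings'' in the first table is then the number of classes produced. For the second table I would repeat the procedure with the input enlarged to all groups whose order \emph{divides} $p^n$, so that isomorphisms across distinct orders are detected; the drop in the class count, in comparison with the first table, is expected because tensoring with the cohomology of a cyclic factor produces many coincidences.

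The substantive obstacle is not conceptual but computational, and it lies entirely on the ``non-isomorphic'' side: the rings are infinite-dimensional, so the algorithm must certify non-isomorphism from a bounded amount of data attached to each presentation. I would rely on the correctness of the algorithm established earlier, and on cheap filtering invariants (Poincar\'e series, dimensions in low degree, nilradical structure) to reject most pairs quickly; only the pairs surviving all filters need to be submitted to the full isomorphism test. In practice the hardest instances are the $267$ groups of order $2^6$, which yield more than $35000$ unordered pairs and include the most complicated non-commutative cohomology rings in the scope of the paper; an effective run therefore depends on ordering the computation so that the filters are applied first and the expensive test only on the remaining candidates.

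Finally, I would assemble the two tables by tallying the partitions produced in the two runs; the ``\# groups'' rows are read off directly from the input lists, so no further argument is required for them. The whole proof is thus a certified computation, whose only mathematical content beyond the algorithm itself is the verification that the imported presentations are correct, which is provided by the cited sources.
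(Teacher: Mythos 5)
Your proposal matches the paper's own route: the theorem is established by a certified computation that feeds the known presentations of $H^*(G,\F)$ from \cite{Car03,GKi11} and the group lists from \cite{BEO02} into the algorithm \textbf{GradedIsomorphism}, with the cheap invariants (Hilbert--Poincar\'e series, nilradicals, and the ideal-quotient/annihilator tests of Section \ref{grcomm}) used to eliminate almost all pairs before the expensive surjectivity check, and the tables read off from the resulting partition into graded isomorphism classes. This is essentially the same approach as the paper, so no further comparison is needed.
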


The phenomena that there are several graded isomorphisms between mod-$p$
cohomology rings for groups of different orders is known in various examples
in the literature. A well-known example is given by the infinite families of
cyclic groups with graded isomorphic cohomology rings. Further, there are 
infinite families of metacyclic groups with graded isomorphic mod-$p$ 
cohomology rings, see \cite{Hue89}. Moreover, the result in \cite{Car05} 
implies that there are infinite families of $2$-groups of fixed coclass 
with graded isomorphic mod-$2$ cohomology rings.

\section{Preliminaries}
\label{sec:prelim}

In this preliminary section we recall some basic facts from the theory
of graded algebras and we establish our notation. Let $\N = \{0, 1, 2, 
\ldots \}$ and let $\F$ be a field. First, recall that an $\F$-algebra 
$A$ is {\em graded} if it can be written as a direct sum of $\F$-vectorspaces
\[A = \bigoplus_{n \in \N} A_n \] 
and $A_i A_j \subseteq A_{i+j}$ holds for each $i,j \in \N$. The
vectorspaces $A_0, A_1, \ldots$ are the \emph{graded components} of $A$. 
An element $a \in A$ is \emph{homogeneous}, if it is contained in a graded
component $A_n$ for some $n \in \N$; in this case we denote its degree by 
$|a|=n$.

Let $F$ be a free graded unital $\F$-algebra, let $\varphi:F\to A$ be a
surjective morphism of graded $\F$-algebras, let $\a$ be a free generating set
of $F$ and let $\mathcal R$ be a generating set of $\ker(\varphi)$. Then
$\langle \a\mid\mathcal R\rangle$ is called a graded \emph{presentation} of
$A$. The presentation is finite if both $\a$ and $\mathcal R$ are finite, and
$A$ is called \emph{finitely presented}, if a finite presentation is
given. Note that by slight abuse of notation we identify $\a$ with
$\{\varphi(x)\mid x\in \a\}$ and say that $\langle \a\mid \mathcal R\rangle$
is a presentation on the generating set $\a$ of $A$.
\medskip

{\bf Definition.} 
Let $\F$ be a finite field and let $A$ be a finitely presented graded 
unital $\F$-algebra with generators in positive degrees. Then $A$ is 
called a {\em finitary} $\F$-algebra. 

\begin{lemma}
Let $A$ be a finitary $\F$-algebra with graded components $A_0, A_1 \ldots$.
\begin{items}
\item[\rm (1)]
$A$ has a finite generating set consisting of homogeneous elements
$\A = (a_1, \ldots, a_m)$.
\item[\rm (2)]
$A$ has a finite presentation on the homogeneous generating set $\A$.
\item[\rm (3)]
Let $n \in \N$. The set $\M_n(\A) = \{a_{i_1} \cdots a_{i_j} \mid 
|a_{i_1}| + \ldots + |a_{i_j}| = n\}$ generates $A_n$ as vectorspace
and thus $A_n$ is finite dimensional.
\end{items}
\end{lemma}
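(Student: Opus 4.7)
The plan is to handle the three parts in sequence, with the substantive work concentrated in part (2).

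For part (1), the definition of finitary already packages a finite graded presentation of $A$, whose free generators on the ambient free graded algebra $F$ are homogeneous by the very construction of a graded presentation and, by hypothesis, map to elements in positive degrees. Their images in $A$ form the required finite homogeneous generating set $\A = (a_1, \ldots, a_m)$. For part (3), every element of $A$ is an $\F$-linear combination of finite products of the $a_i$, and each such product is homogeneous of degree equal to the sum of the degrees of its factors; projecting onto $A_n$ therefore retains exactly the span of $\M_n(\A)$, so this set generates $A_n$. Since $|a_i| \geq 1$ for all $i$, every monomial of total degree $n$ has at most $n$ factors, so $|\M_n(\A)|$ is finite and $A_n$ is finite-dimensional.

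For part (2), I will adapt the classical invariance argument for finitely presented algebras (any finite generating set admits a finite set of relations) to the graded setting. Let $\pi : F \to A$ with $\ker\pi = (r_1, \ldots, r_s)$ be the finite graded presentation supplied by the finitary hypothesis on free homogeneous generators $x_1, \ldots, x_k$, and let $\sigma : G \to A$ be the graded surjection from the free graded $\F$-algebra $G$ on homogeneous generators $y_1, \ldots, y_m$ with $|y_i| = |a_i|$ sending $y_i \mapsto a_i$. Because both $\pi$ and $\sigma$ are surjective in each degree, I can choose homogeneous $w_j \in G$ with $\sigma(w_j) = \pi(x_j)$ and homogeneous $v_i \in F$ with $\pi(v_i) = a_i$, then define the graded morphism $\phi : F \to G$ by $x_j \mapsto w_j$. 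The candidate finite set of homogeneous relations is
\[
 \phi(r_1), \ldots, \phi(r_s),\ y_1 - \phi(v_1), \ldots, y_m - \phi(v_m).
\]
That this generates $\ker(\sigma)$ is the standard bookkeeping: modulo the ideal $I$ it generates, each $y_i$ reduces to $\phi(v_i)$ and hence every $g \in G$ reduces to some $\phi(f)$; if $g \in \ker(\sigma)$ then $\pi(f) = 0$, so $f \in (r_1, \ldots, r_s)$ and $\phi(f) \in I$.

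The main obstacle will be keeping track of homogeneity throughout part (2): one must confirm that the preimages $w_j, v_i$ may indeed be chosen homogeneously (using that $\pi, \sigma$ are graded and surjective in each degree) and that the constructed relations $y_i - \phi(v_i)$ lie in a single graded component of $G$. Once this is checked, the ideal-theoretic argument is routine, and parts (1) and (3) are essentially unpackings of the definitions.
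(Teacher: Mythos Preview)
Your proposal is correct and follows essentially the same approach as the paper: part~(2) is the standard change-of-generators argument (rewrite old generators and relations in terms of the new ones, and add relations expressing the new generators via the old), and part~(3) is elementary. The only minor difference is in part~(1): you read a homogeneous generating set directly off the given graded presentation, whereas the paper instead observes that any finite generating set can be decomposed into its homogeneous summands to obtain one --- both are immediate.
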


\begin{proof}
(1) Each element of $A$ can be written as a finite sum of homogenous
elements. Thus each arbitrary finite generating set of $A$ gives rise
to a finite homogeneous generating set by decomposing each generator 
into homogeneous summands. \\
(2) Let $A = \langle b_1, \ldots, b_k \mid R_1, \ldots, R_l \rangle$ 
be an arbitrary finite presentation for $A$ and let $a_1, \ldots, a_m$
be an arbitrary finite generating set for $A$. Then each $b_i$ can be
written as a word in $a_1, \ldots, a_m$, say $b_i = w_i(a_1, \ldots, 
a_m)$. Similarly, each $a_j$ can be written as word in $b_1, \ldots,
b_k$, say $a_j = v_j(b_1, \ldots, b_k)$. It now follows that 
$A \cong \langle a_1, \ldots, a_m \mid R_i(w_1, \ldots, w_k) \mbox{ for }
1 \leq i \leq l \mbox{ and } a_j = v_j(w_1, \ldots, w_k) \mbox{ for }
1 \leq j \leq m \rangle$. \\
(3) Is elementary. 
\end{proof}

Let $A$ be a graded $\F$-algebra with graded components $A_0, A_1, \ldots$.
We define
\[ I(A) = \bigoplus_{n \geq 1} A_n \;\; \mbox{ and } \;\;
   I_j(A) = \bigoplus_{n \geq j} A_n \mbox{ for } j \geq 1. \]
Then $I(A)$ is called the {\em augmentation ideal} of $A$; it is a 
non-unital $\F$-algebra having the series of ideals $I(A) = I_1(A) 
\geq I_2(A) \geq \ldots$. Note that
\[ A = I(A) \rtimes A_0.\]
Let $I(A) \geq I(A)^2 \geq \ldots$ denote the series of power ideals in
$I(A)$. Then $I(A)/I(A)^c$ is a nilpotent $\F$-algebra of class $c-1$ 
for each $c \geq 1$ by construction.

\begin{lemma} \label{resnil}
Let $A$ be a finitary $\F$-algebra.
\begin{items}
\item[\rm (1)]
Let $c \in \N$. Then $I(A)^c \leq I_c(A)$.
\item[\rm (2)]
$I(A)$ is finitely generated and residually nilpotent.
\item[\rm (3)]
Let $c \in \N$. Then $I(A)/I(A)^c$ is finite dimensional.
\item[\rm (4)]
There exists $d \in \N$ with $I_{d+1}(A) \leq I(A)^2$.
\end{items}
\end{lemma}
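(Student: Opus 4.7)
The plan is to fix once and for all a homogeneous generating set $\A = (a_1, \ldots, a_m)$ of $A$ with $|a_i| \geq 1$, whose existence is guaranteed by the finitary hypothesis together with Lemma 1(1), and then to prove the four claims essentially in the stated order, using (1) inside (2) and (3).

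For (1), I would induct on $c$. The case $c=1$ is definitional. For the inductive step, write $I(A)^{c+1} = I(A) \cdot I(A)^c$ and combine the inductive hypothesis $I(A)^c \leq I_c(A)$ with the grading condition $A_iA_j \subseteq A_{i+j}$ to conclude $I(A)^{c+1} \leq I_1(A)\cdot I_c(A) \leq I_{c+1}(A)$. For (2), finite generation of $I(A)$ is immediate: the $a_i$ all lie in $I(A)$ because $|a_i|\geq 1$, and they already generate $A$ as an $\F$-algebra, so in particular they generate $I(A)$ as a non-unital $\F$-algebra. Residual nilpotence reduces to proving $\bigcap_c I(A)^c = 0$; by (1) this intersection sits inside $\bigcap_c I_c(A)$, and the latter is zero because every element of $A$ is a finite sum of homogeneous components and therefore has a bounded maximal degree. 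For (3), I would observe that the images $\bar a_1, \ldots, \bar a_m$ generate $I(A)/I(A)^c$ as an $\F$-algebra, and that every monomial in the $\bar a_i$ of length at least $c$ vanishes; hence $I(A)/I(A)^c$ is spanned as a vector space by the finitely many non-empty monomials in the $\bar a_i$ of length at most $c-1$.

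For (4), I would take $d = \max_i |a_i|$. By Lemma 1(3), for any $n \geq d+1$ the component $A_n$ is spanned by monomials $a_{i_1}\cdots a_{i_j}$ whose degrees sum to $n$; since no individual generator has degree as large as $n$, every such monomial has length $j\geq 2$ and therefore lies in $I(A)^2$. Summing over $n \geq d+1$ yields $I_{d+1}(A) \leq I(A)^2$.

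None of the four parts is genuinely hard once the right generating set is in place; the main thing to watch is that the arguments for (3) and (4) really do require a \emph{homogeneous} generating set lying in strictly positive degrees, so each step must first invoke Lemma 1 together with the finitary hypothesis rather than start from an arbitrary generating set of $A$.
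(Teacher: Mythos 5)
Your proposal is correct and follows essentially the same route as the paper: the same induction for (1), the same choice of homogeneous positive-degree generators for (2) and the observation that $\bigcap_c I(A)^c \leq \bigcap_c I_c(A) = 0$, and the same degree-counting argument with $d = \max_i |a_i|$ for (4). For (3) you merely spell out the paper's one-line remark that a nilpotent quotient of a finitely generated algebra is finite dimensional; nothing differs in substance.
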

  
\begin{proof}
(1) We use induction on $c$. For $c=1$ we note that $I(A)^1 = I(A) \leq 
I(A) = I_1(A)$. If $I(A)^c \leq I_c(A)$, then $I(A)^{c+1} = I(A) I(A)^c 
\leq I(A) I_c(A) \leq I_{c+1}(A)$. \\
(2) Let $a_1, \ldots, a_m$ be a set of homogeneous generators in positive
degrees for the unital algebra $A$. Then $I(A)$ is generated by $a_1, \ldots,
a_m$ as non-unital algebra. Thus $I(A)$ is finitely generated. Further,
$\cap_{c \geq 1} I(A)^c = \{0\}$ by (1) and thus $I(A)$ is residually 
nilpotent. \\
(3) A nilpotent quotient of a finitely generated algebra is finite
dimensional. \\
(4) By (2) the algebra $I(A)$ is finitely generated and thus it has a 
finite generating set $\A$ of homogeneous elements. Let $d$ be the maximal 
degree of a generator and let $l > d$. Then each monomial in $\M_l(\A)$ 
is a product of at least 2 elements by the definition of $d$. Hence 
$\M_l(\A) \subseteq I(A)^2$. Thus $I_{d+1}(A) = \langle \M_l(\A) \mid
l > d \rangle \subseteq I(A)^2$. 
\end{proof}

\section{Computation with finitary algebras}
\label{nilquot}

In this section we describe some elementary algorithms for finitary 
$\F$-algebras. We assume that a finitary $\F$-algebra $A$ is given
by a finite presentation $\langle a_1, \ldots, a_m \mid R_1, \ldots, 
R_l \rangle$ on homogeneous generators $\A = (a_1, \ldots, a_m)$ 
with positive degrees. We
denote the graded components of $A$ by $A_0, A_1, \ldots$ and we
assume that the Hilbert--Poincar\'e series $P_A(t) = \sum_{n \in \N} 
\dim(A_n) t^n$ is given as rational function.

It is well-known that computations with finitely presented algebraic
objects is difficult in general. For example, in the case of finitely
presented groups it is in general not algorithmically possible to 
solve the word problem (let alone the isomorphism problem). In this
section we show how this and related problems can be solved in our
considered case. For $n \geq 1$ let
\[ \epsilon_n : I(A) \ra I(A)/I(A)^n : a \ms a + I(A)^n\]
the natural epimorphism on the class-$n-1$ nilpotent quotient of $I(A)$. 
Then the image of $\epsilon_n$ is finite dimensional by Lemma \ref{resnil}.

\begin{remark}
Let $n \in \N$. Then a basis and its structure constants table for 
$I(A)/I(A)^n = Im(\epsilon_n)$ can be computed with the methods of 
\cite{Eic12} together with the images of $a_1, \ldots, a_m$ in the
finite-dimensional image. This computation requires an arbitrary
finite presentation for $I(A)$. Note that the given presentation 
$\langle a_1, \ldots, a_m \mid R_1, \ldots, R_l \rangle$ for $A$ 
defines $I(A)$ as non-unital algebra and hence a finite presentation 
for $I(A)$ is given by our setup.
\end{remark}

\subsection{The word problem}
\label{word}

Suppose that a word $w$ in the generators $\A$ is given; that is, $w = 
c + \sum_{i=1}^n c_i l_i$ with $c_i \in \F$ and $l_i \in \M_i(\A)$. Our 
aim is to decide if $w = 0$ in $A$. The following lemma translates this
to an easy calculation in the finite dimensional quotient $I(A)/I(A)^{n+1}$.

\begin{lemma}
$w = 0$ in $A$ if and only if $c_0 = 0$ and $\epsilon_{n+1}(\sum_{i=1}^n
c_i l_i) = 0$.
\end{lemma}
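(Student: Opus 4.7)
The plan is to reduce the word problem to a statement about elements of the augmentation ideal of bounded degree, and then use Lemma \ref{resnil}(1) to conclude that no such non-zero element can lie in $I(A)^{n+1}$.

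First I would split $w = c_0 + v$ with $v = \sum_{i=1}^{n} c_i l_i$ and use the $\F$-vectorspace decomposition $A = A_0 \oplus I(A)$ (where $A_0 = \F$) to observe that $w = 0$ in $A$ if and only if the two summands vanish independently: $c_0 = 0$ in $\F$ and $v = 0$ in $I(A)$. This reduces the lemma to the claim that $v = 0$ if and only if $\epsilon_{n+1}(v) = 0$.

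The forward direction is trivial, since $\epsilon_{n+1}$ is an algebra homomorphism. For the converse I would use the grading: each $l_i \in \M_i(\A)$ is homogeneous of degree $i$, so $v$ is a sum of homogeneous elements of degrees in $\{1,\ldots,n\}$, hence $v \in A_1 \oplus \cdots \oplus A_n$. The assumption $\epsilon_{n+1}(v) = 0$ says $v \in I(A)^{n+1}$, and by Lemma \ref{resnil}(1) this power ideal is contained in $I_{n+1}(A) = \bigoplus_{j \geq n+1} A_j$. Since $(A_1 \oplus \cdots \oplus A_n) \cap I_{n+1}(A) = \{0\}$, we conclude $v = 0$.

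The hard part, such as it is, is the inclusion $I(A)^{n+1} \leq I_{n+1}(A)$, but this is exactly what Lemma \ref{resnil}(1) provides. Everything else is direct bookkeeping on graded components, so I expect no real obstacle.
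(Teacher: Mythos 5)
Your proof is correct and is exactly the argument the paper intends: its one-line proof cites Lemma \ref{resnil}(1), i.e.\ the inclusion $I(A)^{n+1} \leq I_{n+1}(A)$, which is precisely the key step you identify, and the rest is the same bookkeeping with the decomposition $A = A_0 \oplus A_1 \oplus \cdots$. No issues.
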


\begin{proof}
This follows from Lemma \ref{resnil} (1).
\end{proof}

\subsection{Bases for the graded components}
\label{homcomp}

Let $n \geq 1$ and recall that $\M_n(\A)$ generates the graded component
$A_n$ of $A$. The following lemma shows how to reduce this generating
set to a basis via a computation in the finite dimensional quotient 
$I(A)/I(A)^{n+1}$.

\begin{lemma}
$B_n$ is a basis for $A_n$ if and only if $\epsilon_{n+1}(B_n)$ is a 
basis for $\langle \epsilon_{n+1} (\M_n(\A)) \rangle$.
\end{lemma}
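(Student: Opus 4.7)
The plan is to show that the restriction of $\epsilon_{n+1}$ to the graded component $A_n$ is an isomorphism of $\F$-vectorspaces onto $\langle \epsilon_{n+1}(\M_n(\A)) \rangle$; the claimed equivalence then follows immediately, since any vectorspace isomorphism sends bases to bases.

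First I would show surjectivity of $\epsilon_{n+1}|_{A_n}$ onto $\langle \epsilon_{n+1}(\M_n(\A)) \rangle$. By Lemma 1 (3), $\M_n(\A)$ is a generating set for $A_n$ as $\F$-vectorspace, so $\epsilon_{n+1}(A_n) = \langle \epsilon_{n+1}(\M_n(\A)) \rangle$ as a linear subspace of $I(A)/I(A)^{n+1}$. In particular $\epsilon_{n+1}|_{A_n}$ maps onto the target space of the statement.

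Next I would show injectivity of $\epsilon_{n+1}|_{A_n}$, which is the key step. Suppose $a \in A_n$ lies in $\ker(\epsilon_{n+1}) = I(A)^{n+1}$. By Lemma \ref{resnil} (1), $I(A)^{n+1} \subseteq I_{n+1}(A) = \bigoplus_{k \geq n+1} A_k$, so $a \in A_n \cap \bigoplus_{k \geq n+1} A_k$. Because $A = \bigoplus_{k \in \N} A_k$ is a direct sum, this intersection is trivial, hence $a = 0$. This is the only genuine input beyond the direct sum decomposition; I expect it to be the one nontrivial step, since it requires the inclusion of power ideals into ideals of the form $I_c(A)$ established in Lemma \ref{resnil}.

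Finally, combining the two steps, $\epsilon_{n+1}|_{A_n}$ is a bijective $\F$-linear map from $A_n$ onto $\langle \epsilon_{n+1}(\M_n(\A)) \rangle$, so a subset $B_n \subseteq A_n$ is a basis of $A_n$ if and only if its image $\epsilon_{n+1}(B_n)$ is a basis of the image. This gives both directions of the stated equivalence.
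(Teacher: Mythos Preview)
Your argument is correct and is exactly the unpacking of the paper's one-line proof, which reads ``This follows from Lemma \ref{resnil} (1).'' You have simply made explicit that $\epsilon_{n+1}|_{A_n}$ is an $\F$-linear isomorphism onto $\langle \epsilon_{n+1}(\M_n(\A))\rangle$, with injectivity coming from $I(A)^{n+1}\subseteq I_{n+1}(A)$ and surjectivity from $\M_n(\A)$ spanning $A_n$.
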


\begin{proof}
This follows from Lemma \ref{resnil} (1).
\end{proof}

We note that the dimensions of the graded components can be read off readily
from the Hilbert--Poincar\'e series $P_A(t)$. Define $P_A^{(0)}(t) := P_A(t)$
and $P_A^{(n)}(t) := (P_A^{(n-1)}(t) - \dim(A_{n-1}))/t$ for $n > 0$. Then 
$\dim(A_n) = P_A^{(n)}(0)$ holds.

\subsection{Detecting generating sets}
\label{gener}

Suppose that elements $b_1, \ldots, b_k$ of $I(A)$ are given. Our aim is to 
decide if these elements generate $A$ as unital algebra. The following lemma
reduces this to an elementary computation in the finite dimensional quotient
$I(A)/I(A)^2$.

\begin{lemma}
$b_1, \ldots, b_k$ generate $A$ (as unital algebra) if and only if
$\langle \epsilon_2(b_1), \ldots, \epsilon_2(b_k) \rangle = I(A)/I(A)^2$.
\end{lemma}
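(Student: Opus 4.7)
The plan is to prove the two implications separately. The forward direction is essentially formal: $\epsilon_2$ is a surjective homomorphism $I(A) \to I(A)/I(A)^2$, and because $A_0 = \F\cdot 1$ (all generators of $A$ sit in positive degrees), generating $A$ as a unital algebra by $b_1,\ldots,b_k$ amounts to generating $I(A)$ non-unitally, whose image under $\epsilon_2$ then generates the quotient. Since $(I(A)/I(A)^2)^2 = 0$, the subalgebra generated is the same as the $\F$-linear span.

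For the converse, let $B \subseteq I(A)$ be the non-unital $\F$-subalgebra generated by $b_1,\ldots,b_k$. Since $A = \F\cdot 1 \oplus I(A)$, it suffices to show $B = I(A)$. The hypothesis reads $B + I(A)^2 = I(A)$, and the core step is to bootstrap this to
\[ B + I(A)^n = I(A) \quad \text{for every } n \geq 2, \]
which I would prove by induction on $n$. Assuming it for some $n$, expanding
\[ I(A)^2 \subseteq (B+I(A)^n)(B+I(A)^n) \subseteq B^2 + B\cdot I(A)^n + I(A)^n\cdot B + I(A)^{2n} \]
and using $B^2 \subseteq B$, the inclusions $B\cdot I(A)^n,\; I(A)^n\cdot B \subseteq I(A)^{n+1}$, and $2n \geq n+1$ for $n \geq 2$, one obtains $I(A)^2 \subseteq B + I(A)^{n+1}$; substituting into $I(A) = B + I(A)^2$ completes the induction.

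To conclude, take a homogeneous $x \in A_j$ with $j \geq 1$ and apply the identity with $n = j+1$: write $x = b + r$ with $b \in B$ and $r \in I(A)^{j+1} \subseteq I_{j+1}(A)$ by Lemma~\ref{resnil}(1). Since $r$ has no degree-$j$ component, $x$ equals the degree-$j$ part $b_j$ of $b$. The main obstacle is to ensure $b_j$ itself lies in $B$; this is automatic under the natural homogeneity hypothesis on $b_1,\ldots,b_k$, which makes $B$ a graded subalgebra, and is the setting in which this lemma is applied in the sequel. Under this proviso every homogeneous element of $I(A)$ belongs to $B$, and since every element of $I(A)$ is a finite $\F$-linear combination of homogeneous elements, $B = I(A)$, as required.
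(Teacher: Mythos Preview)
Your argument is considerably more careful than the paper's, which simply writes ``This follows from Lemma~\ref{resnil}(2)'' (residual nilpotence of $I(A)$). Your forward direction and the inductive proof that $B + I(A)^n = I(A)$ for all $n\ge 2$ are correct.

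You are also right to flag the homogeneity issue in the last step, and in fact this is not a mere technicality: without assuming that $b_1,\ldots,b_k$ are homogeneous, the converse implication is \emph{false}. Take $A=\F[x]$ with $|x|=1$ and $b=x+x^2\in I(A)$. Then $\epsilon_2(b)$ spans the one--dimensional space $I(A)/I(A)^2$, yet the unital subalgebra $\F[b]=\F[x+x^2]$ is properly contained in $\F[x]$ (for instance, $x\notin\F[b]$ since $x=0$ and $x=-1$ both give $b=0$). Thus residual nilpotence alone, as invoked by the paper, does not suffice; one genuinely needs the $b_i$ to be homogeneous so that $B$ is a graded subalgebra and your degree--truncation argument goes through. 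In the paper's intended application (Step~(3)(b) of \textbf{GradedIsomorphism}) the $b_i$ lie in fixed graded components $B_{d_i}$, so the hypothesis is satisfied there; but as a free--standing lemma the statement needs this proviso, exactly as you noted.

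In summary: your proof is correct and complete once homogeneity is assumed, and your identification of that missing hypothesis is an improvement over the paper's treatment.
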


\begin{proof}
This follows from Lemma \ref{resnil} (2).
\end{proof}

\section{Graded isomorphisms between finitary algebras}
\label{isomtest}

In this section we exhibit our solution to the graded isomorphism
problem for finitary algebras. Recall that
two graded $\F$-algebras $A$ and $B$ are {\em graded isomorphic} if there
exists an $\F$-algebra isomorphism $\nu : A \ra B$ that respects the grading,
that is, it satisfies $\nu(A_n) = B_n$ for each $n \in \N$. We write $A
\cong B$ if $A$ is isomorphic to $B$ as $\F$-algebra and $A \cong_g B$ if
$A$ is graded isomorphic to $B$.

For our algorithm we assume that both finitary algebras $A$ and $B$ are
given by finite presentations on homogeneous generators of positive degrees
and we assume that their Hilbert--Poincar\'e series $P_A$ and $P_B$ are
available as well. We denote the graded components of $A$ and $B$ by 
$A_n$ and $B_n$, respectively.

\begin{lemma} \label{cond}
Let $A$ and $B$ be two finitary $\F$-algebras. If there exists a graded 
isomorphism $\varphi : A \ra B$, then 
\begin{items}
\item[\rm (a)]
$P_A = P_B$, and
\item[\rm (b)]
If $\A$ is a finite homogenous generating set for $A$, then $\varphi(a) 
\in B_{|a|}$ for each $a \in \A$.
\end{items}
\end{lemma}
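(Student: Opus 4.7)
The plan is to unwind the definition of a graded isomorphism and observe that both conclusions are essentially immediate consequences.

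For part (a), I would note that by definition a graded isomorphism $\varphi : A \to B$ satisfies $\varphi(A_n) = B_n$ for every $n \in \N$. Since $\varphi$ is in particular $\F$-linear and bijective, its restriction $\varphi|_{A_n} : A_n \to B_n$ is an isomorphism of $\F$-vectorspaces. By Lemma~\ref{resnil}(3) together with the observation that $A_n$ sits inside $I(A)/I(A)^{n+1}$ up to the $A_0$-part, each graded component is finite-dimensional (alternatively, this is part of the standing assumption that $P_A(t)$ is given as a rational function with nonnegative integer coefficients). Thus $\dim_\F(A_n) = \dim_\F(B_n)$ for every $n$, which gives $P_A(t) = \sum_n \dim(A_n) t^n = \sum_n \dim(B_n) t^n = P_B(t)$.

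For part (b), I would simply observe that if $a \in \A$ is a homogeneous element with $|a| = n$, then $a \in A_n$ by definition of the degree. Applying $\varphi$ and using $\varphi(A_n) = B_n$ yields $\varphi(a) \in B_n = B_{|a|}$.

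Neither part has a genuine obstacle: (a) is the standard fact that graded isomorphisms preserve the Hilbert--Poincar\'e series, and (b) is just a restatement of the grading-respecting condition on single elements. The only mild point worth being explicit about is the finite-dimensionality of the graded components in (a), which is needed to make sense of the coefficient-wise equality of $P_A$ and $P_B$; this is supplied by the finitary hypothesis via Lemma~\ref{resnil}.
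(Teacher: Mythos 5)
Your proof is correct and follows the same route as the paper, which simply notes that both claims are immediate from the defining property $\varphi(A_n)=B_n$ of a graded isomorphism. The only quibble is that the finite-dimensionality of the graded components is most directly supplied by part (3) of the first lemma in Section~\ref{sec:prelim} (via the generating set $\M_n(\A)$) rather than by Lemma~\ref{resnil}, but this does not affect the argument.
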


\begin{proof}
(a) and (b) both follow from the fact that $\varphi(A_n) = B_n$ for each
$n \in \N$, where $A_n$ and $B_n$ denotes the vectorspaces in the gradings
of $A$ and $B$, respectively.
\end{proof}

Lemma \ref{cond} (b) shows that there are only finitely many possible 
options for graded isomorphisms $A \ra B$, since a finite homogenous
generating set for $A$ is given and $B_n$ is finite for each $n \in \N$.

\begin{theorem} \label{check}
Let $A$ and $B$ be two finitary $\F$-algebras and suppose that $P_A = P_B$.
Let $A = \langle a_1, \ldots, a_m \mid R_1, \ldots, R_l \rangle$ a finite
homogenous presentation for $A$ on generators of positive degree and let 
$b_1, \ldots, b_m \in B$ with $b_i \in B_{|a_i|}$ for $1 \leq i \leq m$. 
The map $\varphi : A \ra B : a_i \ms b_i$ extends to a graded isomorphism 
if and only if
\begin{items}
\item[\rm (a)]
$R_j(b_1, \ldots, b_m) = 0$ for $1 \leq j \leq l$, and
\item[\rm (b)]
$b_1, \ldots, b_m$ generate $B$.
\end{items}
\end{theorem}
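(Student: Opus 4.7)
The forward direction is essentially automatic: if $\varphi$ is a graded isomorphism with $\varphi(a_i) = b_i$, then applying $\varphi$ to $R_j(a_1,\ldots,a_m) = 0$ gives (a), and surjectivity of $\varphi$ together with the fact that the $a_i$ generate $A$ gives (b). So the content lies in the converse.

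For the converse, my plan is to assume (a) and (b) and construct the isomorphism in two steps. First, I would invoke the universal property of the presentation $\langle a_1,\ldots,a_m\mid R_1,\ldots,R_l\rangle$. Let $F$ be the free graded unital $\F$-algebra on generators $x_1,\ldots,x_m$ with $|x_i|=|a_i|$, and let $\pi: F\to A$ be the canonical projection with kernel generated by the $R_j$. Since each $b_i\in B_{|a_i|}$, assigning $x_i\mapsto b_i$ defines a graded algebra homomorphism $\tilde\varphi: F\to B$, and condition (a) says $\tilde\varphi(R_j)=0$ for all $j$, so $\ker\pi\subseteq\ker\tilde\varphi$ and $\tilde\varphi$ factors through a graded homomorphism $\varphi: A\to B$ with $\varphi(a_i)=b_i$.

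Next, I would use condition (b) together with the Hilbert--Poincar\'e hypothesis. Condition (b) says the $b_i = \varphi(a_i)$ generate $B$ as a unital $\F$-algebra, so $\varphi$ is surjective. Because $\varphi$ respects the grading, restricting to each component gives a surjection $\varphi|_{A_n}: A_n\to B_n$ (any preimage of $b\in B_n$ can be decomposed into homogeneous parts, and only the part of degree $n$ contributes). By Lemma~2(3) both $A_n$ and $B_n$ are finite dimensional, and the hypothesis $P_A = P_B$ gives $\dim A_n = \dim B_n$ for every $n$. A surjection between finite-dimensional vector spaces of equal dimension is an isomorphism, so $\varphi|_{A_n}$ is bijective for every $n$, hence $\varphi$ is a graded isomorphism.

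The argument is essentially routine once one has Lemma~\ref{resnil} and the observation that $P_A=P_B$ forces componentwise equality of dimensions. The only step that requires any care is the invocation of the universal property of the presentation, i.e.\ checking that the assignment $a_i\mapsto b_i$ really extends to a well-defined graded homomorphism on all of $A$; after that the finite-dimensional linear algebra in each degree closes the argument cleanly, so I do not expect a genuine obstacle.
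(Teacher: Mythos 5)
Your proof is correct and follows essentially the same route as the paper: factor the map through the free graded algebra via condition (a), get surjectivity and compatibility with the grading from (b) and the choice $b_i \in B_{|a_i|}$, and deduce injectivity from $P_A = P_B$ together with the finite dimensionality of the graded components. You merely spell out the universal-property and dimension-count steps that the paper leaves implicit.
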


\begin{proof}
First suppose that $\varphi$ extends to a graded isomorphism. Then
$0 = \varphi(0) = \varphi( R_j(a_1, \ldots, a_m)) = R_j(\varphi(a_1),
\ldots, \varphi(a_n)) = R_j(b_1, \ldots, b_n)$ and thus (a) holds. 
(b) is obvious. 

Now suppose that (a) and (b) hold. Then (a) yields that $\varphi$ is
an algebra homomorphism. As $b_i \in B_{|a_i|}$ for $1 \leq i \leq m$,
it follows that $\varphi$ respects the grading and $\varphi(A_n) \subseteq
B_n$ for $n \in \N$. (b) asserts that $\varphi$ is surjective. Hence
$\varphi(A_n) = B_n$ for each $n \in \N$. Finally, as $P_A = P_B$, we obtain 
that $\varphi$ is also injective and hence a graded isomorphism.
\end{proof} 

Lemma \ref{cond} and Theorem \ref{check} induce the following method to 
determine a graded isomorphism $A \ra B$ if it exists. Let $A = \langle
a_1, \ldots, a_m \mid R_1, \ldots, R_l \rangle$ be a finite homogenous
presentation on generators of positive degrees and let $d_i = |a_i|$ for 
$1 \leq i \leq m$. 
\bigskip

{\bf GradedIsomorphism}($A$, $B$) 
\begin{items}
\item[(1)] 
Test if $P_A = P_B$; if not, then return false.
\item[(2)]
Determine bases for $B_{d_1}, \ldots, B_{d_m}$.
\item[(3)]
For each $(b_1, \ldots, b_m) \in B_{d_1} \times \ldots \times B_{d_m}$ do
\begin{items}
\item[(a)]
Check that $R_j(b_1, \ldots, b_m) = 0$ for $1 \leq j \leq l$.
\item[(b)]
Check that $b_1, \ldots, b_m$ generate $B$.
\item[(c)]
If (a) and (b) are satisfied, then return $(b_1, \ldots, b_m)$.
\end{items}
\item[(4)]
Return false;
\end{items}
\bigskip

Note that bases for $B_{d_1}, \ldots, B_{d_m}$ can be determined as in
Section~\ref{homcomp}. Each of these spaces is finite and thus the 
for-loop in Step (3) is a finite loop.  Step (3a) can be implemented by 
the method in Section~\ref{word}.  Step (3b) can be performed as in
Section~\ref{gener}.

\begin{remark} \label{limit}
Let $w_1 = \max \{ d_i \mid 1 \leq i \leq m\}$ and let $w_2$ denote
the maximal degree of a monomial in $R_1, \ldots, R_l$. Further, let
$w = \max \{ 1, w_1, w_2 \}$. Then the algorithm {\bf GradedIsomorphism}
requires the computation of $\epsilon_{w+1}$.
\end{remark}

If $I(A)/I(A)^{w+1}$ and $I(B)/I(B)^{w+1}$ are both available, then this
allows further reductions in the algorithm {\bf GradedIsomorphism}. For
example, if $A$ and $B$ are graded isomorphic, then $dim(I(A)/I(A)^c) 
= dim(I(B)/I(B)^c)$ for each $c \geq 1$ and this induces an additional
condition that may be checked in Step (1) of the algorithm for all
available nilpotent quotients. Further, if $a_i \in I(A)^{c_i}$ for some
$c_i \in \N$, then $\varphi(b_i) \in I(B)^{c_i} \cap B_{d_i}$. This
can be used to obtain a reduction in Step (3) of the algorithm. 

\section{The graded commutative case}
\label{grcomm}

A graded $\F$-algebra $A$ is called \emph{graded commutative}, if for all
homogeneous elements $x,y\in A$ the equation $x\cdot y = (-1)^{|x|\cdot
|y|}y\cdot x$ holds. If $char(\F) = 2$, then a graded commutative algebra
is commutative and the free graded commutative $\F$-algebra on $m$ generators
is isomorphic to the polynomial ring on $m$ generators. If $char(\F) > 2$,
then a free graded commutative $\F$-algebra is isomorphic to a tensor 
product of a polynomial ring and an exterior algebra.

We present graded commutative $\F$-algebras not as quotients of free graded
unital $\F$-algebras (as in Section~\ref{sec:prelim}), but as quotients of
free graded commutative $F$-algebras. Hence, if $F$ is a free graded
commutative $\F$-algebra, and $\varphi:F\to A$ is a surjective morphism of
graded $\F$-algebras, and $\a$ is a free generating set of $F$ and $\mathcal
R$ is a generating set of $K = \ker(\varphi)$, then we call $\langle
\a\mid\mathcal R\rangle$ a \emph{graded commutative presentation} of
$A$. Note that one can choose $\mathcal R$ so that its elements are
homogeneous.

Finitely presented graded commutative algebras are noetherian and are either
commutative or are non-commutative $G$-algebras, for which a Gr\"obner basis
theory is available much similar to the commutative case \cite[Chapter
1.9]{GP08}. With Gr\"obner bases, one has an alternative way to solve
computational problems than by using nilpotent quotients as in Sections
\ref{nilquot} and \ref{isomtest}. That approach can be more effective; in
particular, this is the case if the parameter $w$ as determined in Remark
\ref{limit} is large.

Let $\langle a_1, \ldots, a_m \mid R_1, \ldots, R_l \rangle$ be a finite
graded commutative presentation of $A$ corresponding to $\varphi : F \ra A$
with $K = \ker(\varphi)$, as above. We  consider a \emph{Gr\"obner basis}
$\mathcal B = (B_1, \ldots, B_k)$ for $K$.
\begin{items}
\bulit
The word problem in $A$ can be solved by polynomial reduction with 
respect to $\mathcal B$, and a basis of $A_n$ is given by those elements of $\M_n(A)$ that are not 
divisible by any of the leading monomials of $B_1,...,B_k$.
\bulit
By~\cite[Proposition 3.6.6 d)]{KRo05}, the computation of Gr\"obner bases 
also allows for an effective test whether a subset of $A$ forms a generating 
set of $A$.
\bulit
The Hilbert--Poincar\'e series $P_A(t)$ can be computed as in~\cite[Chapter
5.2]{GP08} or~\cite[Chapter 5]{KRo05},  and is a rational
function.
More generally, if $I\leq A$ is an ideal generated by homogeneous 
elements, then the quotient ring $A/I$ is finitary graded commutative, and its
Hilbert--Poincar\'e series $P_{A/I}(t)$ (to which we also refer to as the
``Hilbert--Poincar\'e series of $I$'') can be computed, too.
\bulit
The nilradical of $A$ can in principle be computed as in~\cite[Chapter
4.5]{KRo05}. There is a more efficient alternative approach is available
for cohomology rings.
If $G$ is a finite group, then the nilradical of $H^*(G, \mathbb F_p)$ is
formed by the elements that have nilpotent restriction to all the maximal 
$p$-elementary abelian subgroups of $G$, by a result of Quillen (see also 
\cite[Theorem 8.4.3]{Car05}). Based on this, the nilradicals of modular 
cohomology rings of finite groups can be
computed by intersecting the preimages of certain explicitly given ideals
under morphisms (namely restrictions) of finitely presented graded commutative
$\F$-algebras. The preimages can be computed as in \cite[Section 1.8.10,
Remark 1.8.17]{GP08}, and their intersection as in~\cite[Section 1.7.7]{GP08}.
\bulit
If $I\leq A$ is an ideal generated by homogeneous elements, then its 
annihilator $\Ann(I)=\{x\in A \mid \forall y\in I: y\cdot x=0\}\leq A$ 
can be computed~\cite[Section 2.8.4]{GP08}.
\end{items}

When we test in Step~(3)(b) whether elements $b_1,\ldots, b_n\in B$ generate
$B$ according to \cite[Proposition 3.6.6 d)]{KRo05}, then the computation of a
Gr\"obner basis \emph{in elimination order} for an ideal defined in terms of
$(b_1,\ldots,b_n)$ is needed. This is potentially a very expensive
operation. It is thus crucial to reduce the possible choices of
$(b_1,\ldots,b_n)$ in Step~(3) by other methods, as described in the rest of
this section. Here, elimination is used as well, but it turns out that this is
feasible and reduces the computation time drastically.

\subsection{Early detection of non-isomorphic algebras}
\label{sec:earlyreductions}

Comparing $P_{A}(t)$ and $P_{B}(t)$ as in Step~(1) of Algorithm
\textbf{GradedIsomorphism} allows to disprove the existence of a graded
isomorphism between $A$ and $B$ in many cases. In addition to that, we compute
the nilradicals $\nilrad(A)$ and $\nilrad(B)$ of $A$ and $B$, and test if
$P_{A/\nilrad(A)}(t)=P_{B/\nilrad(B)}(t)$. This may detect that $A\not\cong_g
B$ even in cases where $P_{A}(t)=P_{B}(t)$.

\subsection{Reducing the list of potential generator images}
\label{sec:cutbranches}

We now focus on possible reductions of the images $(b_1,\ldots,b_n)$ of
$(a_1,\ldots, a_n)$ to be considered in Step~(3) of Algorithm
\textbf{GradedIsomorphism}.

Let $(\a_A, {\mathcal R}_A)$ and $(\a_B, {\mathcal R}_B)$ be finite graded
commutative presentations of $A$ and $B$.  Let $\hat\a_A = (a_{i_1},\ldots,
a_{i_k})$ be a subset of $\a_A$, and let $b_{i_1},...,b_{i_k}\in B$. Let
$I=\langle \hat\a_A\rangle\leq A$ be the ideal generated by $\hat\a_A$, and
$J=\langle b_{i_1},...,b_{i_k}\rangle\leq B$. We discuss here three tests that
often allow to conclude that there is no graded homomorphism mapping $a_{i_j}$
to $b_{i_j}$ for $j=1,...,k$.

Firstly, if $\varphi: A\to B$ is a graded isomorphism, then
$P_{A/I}(t)=P_{B/\varphi(I)}(t)$. Hence, if the ideals $I\leq A$ and $J\leq
B$ have different Hilbert--Poincar\'e series, then the map $a_{i_j}\mapsto
b_{i_j}$ can not be extended to a graded isomorphism.

Secondly, by elimination of the variables $\a_A\setminus \hat\a_A$ from the
relation ideal $\mathcal R_A$ as in \cite[Section 1.8.2]{GP08}, one obtains
relations $\hat R_{A,1},...,\hat R_{A,l}$ that only involve elements of
$\hat\a_A$. If $\varphi: A\to B$ is a graded homomorphism, then $\hat
R_{A,c}\left(\varphi(a_{i_1}),...,\varphi(a_{i_k})\right)=0$ for all
$c=1,...,l$, which can be effectively tested using a Gr\"obner basis of
$\langle {\mathcal R}_B\rangle$. Hence, if $\hat R_{A,c}(b_{i_1},...,b_{i_k})\not=0$ for
some $c=1,...,l$, then the map $a_{i_j}\mapsto b_{i_j}$ can not be extended to
a graded homomorphism.

Thirdly, one can compute the \emph{annihilators} $\Ann\left(I\right) \leq A$
and $\Ann\left(J\right)\leq B$. If they have different Hilbert--Poincar\'e
series, then the map $a_{i_j}\mapsto b_{i_j}$ can not be extended to a graded
isomorphism. In principle, it would also be possible to compare the radicals
of the two ideals, but we found that this does not help to improve efficiency.

How do the above tests to help simplify in Step~(3) of Algorithm
\textbf{GradedIsomorphism}? It suffices to restrict Step~(3) to
those tuples $(b_1,\ldots,b_n)\in B_{d_1}\times...\times B_{d_n}$ that pass
the above three tests for all subsets of $(a_1,\ldots, a_n)$. In a practical
implementation, one would start by using the three tests on one-element
subsets, \emph{i.e.}, one would compute all possible elements of $B_{d_i}$
that may occur as images of $a_i$ under any graded algebra isomorphism, for
$i=1,...,n$. This will normally leave very few possibilities, say, $\hat
B_{d_i}\subseteq B_{d_i}$. Next, one would consider all possible pairs
$(b_i,b_j)\in \hat B_{d_i}\times \hat B_{d_j}$, and use the three tests to
determine all possible images of $(a_i,a_j)$ under any graded isomorphism, for
$i,j=1,...,n$. And so on, with larger subsets of $\a_A$.

If $A\not\cong_g B$, the three tests will often leave no or only very few
possible choices for $(b_1,...,b_n)$ in Step~(3) of Algorithm
\textbf{GradedIsomorphism} that need to be tested in Step~(3)(b). And if
$A\cong_g B$, then often the first possible choice of $(b_1,...,b_n)$ will
turn out to yield a graded isomorphism by the final test in Step~(3)(b).

\begin{remark}
  One should be aware that computing the nilradicals of $A$ and $B$, a graded
  commutative presentation of $\langle\langle\hat A\rangle\rangle$, or
  annihilators, can generally be computationally very expensive. However, in
  all the examples that we considered, the gain of using the additional
  reductions in Algorithm \textbf{GradedIsomorphism} outweighs these
  additional costs by far.
\end{remark}

\section{Examples}
\label{sec:examples}

We summarise here some examples of cohomology rings of finite groups. A
minimal graded commutative presentation for each ring has been computed with
the optional \texttt{pGroupCohomology} package~\cite{GKi13} for
Sage~\cite{Sage}. The package uses Singular~\cite{Singular} for the
computation of Gr\"obner bases, annihilators and elimination in graded
commutative rings. Nilradicals are computed as described in
Section~\ref{results}.

\subsection{Early detection of non-isomorphy}

Let $A$ be the mod-$3$ cohomology ring of the extraspecial $3$-group of order
$27$ and exponent 3, which is group number $3$ of order $27$ in the small
groups library~\cite{SmallGroups}. Let $B$ be the mod-$3$ cohomology ring of
the Sylow $3$-subgroup of $U_3(8)$, which is group number $9$ of order $81$ in
the small groups library. Each of these algebras has a minimal graded
commutative presentations with generators in degrees $1$, $1$, $2$, $2$, $2$,
$2$, $3$, $3$, and $6$.

The Hilbert-Poincar\'e series of the two algebras, respectively of their
nilradicals, coincide. The power series expansion of the Hilbert-Poincar\'e
series is \[ P_{A}(t)=P_{B}(t) = 1 + 2t + 4t^{2} + 6t^{3} + 7t^{4} + 8t^{5} +
9t^{6} + 10t^{7} + 12t^{8} + \cdots\] Thus, $B_1$, $B_2$, $B_3$ and $B_6$
contain $3^2-1=8$, $3^4-1=80$, $3^6-1=728$ and $19682$ non-zero elements,
respectively. Hence, without the reductions from
Section~\ref{sec:cutbranches}, one would need to consider $8^2\cdot 80^4\cdot
728^2\cdot 19682>10^{19}$ possible images for the generators of $A$.

However, it turns out that there is one degree-$2$ generator $a\in A$ so that
$P_{A/\langle a\rangle}(t)$ is different from $P_{B/\langle b\rangle}(t)$, for
each of the $80$ non-zero elements of $B_2$. Hence we can readily detect that
$A$ and $B$ are not graded isomorphic.

\subsection{A more difficult to detect pair of non-isomorphic algebras}

Let $A$ be the mod-$2$ cohomology ring of group number $27$ of order $32$ in
the small groups library, and let $B$ be the mod-$2$ cohomology ring of group
number $128$ of order $64$. They both have minimal graded commutative
presentations formed by three generators in degree $1$ and three generators in
degree $2$, and four relations.

The Hilbert-Poincar\'e series of the algebras, respectively of their
nilradicals, coincide. The power series expansion of the Hilbert-Poincar\'e
series is
\[ P_A(t) = 1 + 3t + 7t^{2} + 13t^{3} + 22t^{4} + 34t^{5} + 50t^{6} + \cdots\]
Thus, $B_1$ contains $2^3-1=7$ and $B_2$ contains $2^7-1=127$ non-zero
elements. Hence, without the reductions from Section~\ref{sec:cutbranches},
one would need to consider $7^3\cdot 127^3>7\cdot 10^8$ possible images for
the generators of $A$.

In contrast to the previous example, the methods from
Section~\ref{sec:cutbranches} applied to one-element subsets of the generating
set of $A$ are not strong enough to prove $A\not\cong_g B$. However, when
applied to the triple of degree-$1$ generators, the tests only leave $6$
candidates for the images of the triple under isomorphism. Applied to the
three degree-$1$ and two of the degree-$2$ generators, still as many as $4608$
different isomorphic images seem possible. And thus one needs to combine each
of them with the $111$ potential isomorphic images of the remaining degree-$2$
generator. In all but $176$ cases, the mapping of generators does not extend
to a homomorphism, and in the remaining $176$ cases the homomorphism is not
surjective. Hence, the two algebras are not graded isomorphic.

\section{Application to cohomology rings}
\label{results}

Let $p$ be a prime, let $G$ be a finite $p$-group and let $\F$ be the
field with $p$ elements. Then the mod-$p$ cohomology ring $H^*(G, \F)$
is a graded $\F$-algebra defined by
\[ H^*(G, \F) = \bigoplus_{n \in \N} H^n(G, \F). \]

By the theorem of Evens--Venkov (see also \cite[Theorem 6.5.1]{Car05}),
modular cohomology rings of finite groups are finitely presentable
graded-commutative algebras. Each graded component $H^n(G, \F)$ is a finite
dimensional vectorspace over $\F$ and $H^0(G, \F) \cong \F$. Hence 
$H^*(G, \F)$ is a finitary $\F$-algebra. The methods of \cite{GKi11} 
determine a \emph{minimal} presentation of $H^*(G, \F)$ and these allow
to apply the methods described in the first part of this paper.


In the following sections we exhibit the graded isomorphisms among the 
mod-$p$ cohomology rings of the $p$-groups of order at most $100$. As
a preliminary step we observe that the rank of the underlying $p$-group
is an isomorphism invariant for the cohomology ring. Recall that the
rank of a finite $p$-group $G$ is the rank of the finite elementary 
abelian quotient $G/[G,G]G^p = G/\phi(G)$ of $G$ or, equivalently, the 
minimal generator number of $G$.

In the following we consider the $p$-groups of order at most $100$ by
their generator number.  The groups with $1$ generator are the cyclic 
groups; it is well-known that the cyclic of order $p^n$ have graded
isomorphic mod-$p$-cohomology rings (with the exception of the cyclic 
group of order $2$). We thus omit this case in our list below. It then
remains to consider the groups of order dividing $2^6$, $3^4$, $5^2$ 
and $7^2$. The cases $5^2$ and $7^2$ are again well-understood and hence 
we focus on $2^6$ and $3^4$ in the following exposition.

\subsection{$2$-groups}

We give here a complete and irredundant list of all graded isomorphic mod-$2$
cohomology rings $H^*(G, \F)$ for the groups $G$ of order dividing $64$. We
identify a group $G$ by its id $[ order, number]$ in the SmallGroups library,
see \cite{SmallGroups}.

Each of the following lists of groups statisfies that the mod-$2$ cohomology
rings of the considered groups are pairwise graded isomorphic, and mod-$2$
cohomology rings of groups from different lists are not graded isomorphic. If
a group of order dividing $64$ does not appear in any of the lists, then
the graded isomorphism type of its mod-$2$ cohomology ring is unique among all
groups of order dividing $64$.

Additionally to the ids of the groups in the list, we include the rank of
the groups and in many cases also a structure description. For the latter,
we denote with $C_k,D_k, Q_k, SD_k$ the cyclic, dihedral, 
quaternion and semidihedral groups of order $k$, respectively. The symbols 
$\times$, $:$ and $.$ describe a direct product, a split extension and an
arbitrary extension, respectively.

If one of the groups in one of the following lists is metacyclic, then all
groups are metacyclic and we include this information as well. We note that 
our result differ in one case from the theoretical description of the mod-$p$
cohomology rings of metacyclic groups in \cite{Hue89}: our results imply
that the mod-$2$ cohomology rings of the metacyclic groups $[32, 15]$ and 
$[64,49]$ are graded isomorphic to each other, but they are not graded 
isomorphic to $[64, 45]$ as Theorem E(2) of \cite{Hue89} suggests. This
is based on the fact that the presentations of the mod-$p$ cohomology rings
of $[32, 15]$ and $[64, 49]$ as given in \cite{GKi11} and also in 
\cite[Appendix]{Car03} are not compatible with that in \cite{Hue89}; for
example, the presentations obtained in \cite{GKi11} and in 
\cite[Appendix]{Car03} imply that the underlying cohomology rings have 
a non-nilpotent element in degree $3$ in contradiction to Theorem E(2) of
\cite{Hue89}.

\begin{items}
\bulit groups $[ 4, 1 ], [ 8, 1 ], [ 16, 1 ], [ 32, 1 ], [ 64, 1 ]$ \\ 
rank 1 and cyclic

\bulit groups $[ 8, 2 ], [ 16, 5 ], [ 32, 16 ], [ 64, 50 ]$ \\ 
rank 2, metacyclic, and structure $C_{2^n} \times C_2$ ($n>1$)

\bulit groups $[ 8, 3 ], [ 16, 7 ], [ 32, 18 ], [ 64, 52 ]$ \\ 
rank 2, metacyclic, and structure $D_{2^n}$ ($n>2$)

\bulit groups $[ 16, 2 ], [ 32, 3 ], [ 32, 4 ], [ 64, 2 ], [ 64, 3 ], 
  [ 64, 26 ], [ 64, 27 ]$ \\ 
rank 2, metacyclic, and structure $C_{2^n} : C_{2^m}$ ($n,m>1$)

\bulit groups $[ 16, 3 ], [ 32, 9 ], [ 64, 38 ]$ \\ 
rank 2, metacyclic, and structure $(C_{2^n} \times C_2) : C_2$ ($n>1$)

\bulit groups $[ 16, 4 ], [ 32, 12 ], [ 32, 13 ], [ 32, 14 ], 
[ 64, 15 ], [ 64, 16 ], [ 64, 44 ], [ 64, 47 ], [ 64, 48 ]$ \\ 
rank 2, metacyclic, and structure $C_{2^m} : C_{2^n}$ ($m,n>1$)

\bulit groups $[ 16, 6 ], [ 32, 17 ], [ 64, 51 ]$ \\ 
rank 2, metacyclic, and structure $C_{2^n} : C_2$ ($n>2$)

\bulit groups $[ 16, 8 ], [ 32, 19 ], [ 64, 53 ]$ \\ 
rank 2, metacyclic, and structure $SD_{2^n}$ ($n>3$)

\bulit groups $[ 16, 9 ], [ 32, 20 ], [ 64, 54 ]$ \\ 
rank 2, metacyclic, and structure $Q_{2^n}$ ($n>3$)

\bulit groups $[ 16, 10 ], [ 32, 36 ], [ 64, 183 ]$ \\ 
rank 3, and structure $C_{2^n} \times C_2 \times C_2$ ($n>1$)

\bulit groups $[ 16, 11 ], [ 32, 39 ], [ 64, 186 ]$ \\ 
rank 3, and structure $C_2 \times D_{2^n}$ ($n>2$)

\bulit groups $[ 32, 2 ], [ 64, 17 ], [ 64, 21 ]$ \\ 
rank 2, and structure $(C_{2^n} \times C_2) : C_4$ ($n>1$)

\bulit groups $[ 32, 5 ], [ 64, 6 ], [ 64, 29 ]$ \\ 
rank 2, and structure $(C_{2^n} \times C_{2^m}) : C_2$ ($n>m$)

\bulit groups $[ 32, 10 ], [ 64, 39 ]$ \\ 
rank 2, and structure $Q_{2^n} : C_4$ ($n>2$)

\bulit groups $[ 32, 15 ], [ 64, 49 ]$  \\ 
rank 2, metacyclic, and structure $C_4 . D_{2^n}$ ($n>2$)

\bulit groups $[ 32, 21 ], [ 64, 83 ], [ 64, 84 ]$ \\ 
rank 3, and structure $(C_{2^n} : C_4) \times C_2$ ($n>1$)

\bulit groups $[ 32, 22 ], [ 64, 95 ]$ \\ 
rank 3, and structure $((C_{2^n} : C_2) : C_2) \times C_2$ ($n>1$)

\bulit groups $[ 32, 23 ], [ 64, 103 ], [ 64, 106 ], [ 64, 107 ]$ \\ 
rank 3, and structure $(C_{2^m} : C_{2^n}) \times C_2$ ($m,n>1$)

\bulit groups $[ 32, 25 ], [ 64, 115 ], [ 64, 118 ], [ 64, 123 ]$ \\ 
rank 3

\bulit groups $[ 32, 26 ], [ 64, 126 ]$ \\ 
rank 3, and structure $C_{2^n} : Q_8$ ($n>1$)

\bulit groups $[ 32, 28 ], [ 64, 140 ], [ 64, 147 ]$ \\ 
rank 3

\bulit groups $[ 32, 29 ], [ 64, 155 ], [ 64, 157 ]$ \\ 
rank 3

\bulit groups $[ 32, 31 ], [ 64, 167 ]$ \\ 
rank 3, and structure $(C_{2^n} \times C_4) : C_2$ ($n>1$)

\bulit groups $[ 32, 34 ], [ 64, 174 ]$ \\ 
rank 3, and structure $(C_{2^n} \times C_4) : C_2$ ($n>1$)

\bulit groups $[ 32, 35 ], [ 64, 181 ]$ \\ 
rank 3, and structure $C_{2^n} : Q_8$ ($n>1$)

\bulit groups $[ 32, 37 ], [ 64, 184 ]$ \\ 
rank 3, and structure $(C_{2^n} : C_2) \times C_2$ ($n>2$)

\bulit groups $[ 32, 38 ], [ 64, 185 ]$ \\ 
rank 3, and structure $(C_{2^n} \times C_2) : C_2$ ($n>2$)

\bulit groups $[ 32, 40 ], [ 64, 187 ]$ \\ 
rank 3, and structure $C_2 \times SD_{2^n}$ ($n>3$)

\bulit groups $[ 32, 41 ], [ 64, 188 ]$ \\ 
rank 3, and structure $C_2 \times Q_{2^n}$ ($n>3$)

\bulit groups $[ 32, 42 ], [ 64, 189 ]$ \\ 
rank 3, and structure $(C_{2^n} : C_2) \times C_2$ ($n>2$)

\bulit groups $[ 32, 43 ], [ 64, 190 ]$ \\ 
rank 3, and structure $(C_2 \times D_{2^n}) : C_2$ ($n>2$)

\bulit groups $[ 32, 44 ], [ 64, 191 ]$ \\ 
rank 3, and structure $(C_2 \times Q_{2^n}) : C_2$ ($n>2$)

\bulit groups $[ 32, 45 ], [ 64, 246 ]$ \\ 
rank 4, and structure $C_{2^n} \times C_2 \times C_2 \times C_2$ ($n>1$)

\bulit groups $[ 32, 46 ], [ 64, 250 ]$ \\ 
rank 4, and structure $C_2 \times C_2 \times D_{2^n}$ ($n>2$)

\bulit groups $[ 64, 13 ], [ 64, 14 ]$ \\ 
rank 2

\bulit groups $[ 64, 31 ], [ 64, 40 ]$ \\ 
rank 2, and structure $(C_{16} \times C_{2}) : C_{2}$

\bulit groups $[ 64, 112 ], [ 64, 113 ]$ \\ 
rank 3, and structure $(C_{4} : C_{8}) : C_{2}$ 

\bulit groups $[ 64, 119 ], [ 64, 121 ]$ \\ 
rank 3

\bulit groups $[ 64, 120 ], [ 64, 122 ]$ \\ 
rank 3, and structure $Q_{16} : C_4$

\bulit groups $[ 64, 124 ], [ 64, 125 ]$ \\ 
rank 3

\bulit groups $[ 64, 142 ], [ 64, 148 ]$ \\ 
rank 3

\bulit groups $[ 64, 144 ], [ 64, 146 ]$ \\ 
rank 3

\bulit groups $[ 64, 156 ], [ 64, 158 ]$ \\ 
rank 3, and structure $Q_8 : Q_8$ 

\bulit groups $[ 64, 161 ], [ 64, 162 ]$ \\ 
rank 3, and structure $(C_2 \times (C_4 : C_4)) : C_2$

\bulit groups $[ 64, 164 ], [ 64, 165 ]$ \\ 
rank 3, and structure $(Q_8 : C_4) : C_2$

\bulit groups $[ 64, 173 ], [ 64, 176 ]$ \\ 
rank 3, and structure $(C_8 \times C_4) : C_2$
\end{items} 

\subsection{$3$-groups}

\begin{items}
\bulit
groups $[3,1], [ 9, 1 ], [ 27, 1 ], [ 81, 1 ]$ \\
rank 1 and cyclic
\bulit
groups $[ 9, 2 ], [ 27, 2 ], [ 81, 2 ], [ 81, 4 ], [ 81, 5 ]$ \\
rank 2, metacyclic and structure $C_{3^n} : C_{3^m}$
\bulit
groups $[ 27, 4 ], [ 81, 6 ]$ \\
rank 2, and structure $C_{3^n} : C_3$
\bulit
groups $[ 27, 5 ], [ 81, 11 ]$\\
rank 3, and structure $C_{3^n} : C_3^2$
\end{items}


\begin{thebibliography}{10}

\bibitem{SmallGroups}
H.~U. Besche, B.~Eick, and E.~O'Brien.
\newblock {\em SmallGroups - a library of groups of small order}, 2005.
\newblock A refereed {{\sf GAP} 4} package, see \cite{GAP}.

\bibitem{BEO02}
H.~U. Besche, B.~Eick, and E.~A. O'Brien.
\newblock A millenium project: constructing small groups.
\newblock {\em Internat. J. Algebra Comput.}, 12:623 -- 644, 2002.

\bibitem{Car05}
J.~F. Carlson.
\newblock Coclass and cohomology.
\newblock {\em J. Pure Appl. Algebra}, 200(3):251--266, 2005.

\bibitem{Car03}
J.~F. Carlson, L.~Townsley, L.~Valeri-Elizondo, and M.~Zhang.
\newblock {\em Cohomology rings of finite groups}, volume~3 of {\em Algebras
  and Applications}.
\newblock Kluwer Academic Publishers, Dordrecht, 2003.
\newblock With an appendix: Calculations of cohomology rings of groups of order
  dividing 64 by Carlson, Valeri-Elizondo and Zhang.

\bibitem{Eic12}
B.~Eick.
\newblock Computing nilpotent quotients of associative algebras and algebras
  satisfying a polynomial identity.
\newblock {\em Internat. J. Algebra Comput.}, 21(8):1339--1355, 2011.

\bibitem{GKi11}
D.~J. Green and S.~A. King.
\newblock The computation of the cohomology rings of all groups of order 128.
\newblock {\em J. Algebra}, 325:352--363, 2011.

\bibitem{GKi13}
D.~J. Green and S.~A. King.
\newblock $p$-Group Cohomology Package ({V}ersion 2.1.4).
\newblock Peer reviewed optional package for Sage~\cite{Sage}\@.
\newblock Available from  \\
http://sage.math.washington.edu/home/SimonKing/Cohomology.

\bibitem{Singular}
G.-M. Greuel, G.~Pfister, and H.~Sch{\"o}nemann.
\newblock {\em {\sc Singular} --- {A} computer algebra system for polynomial
  computations}, 2009.
\newblock Available from http://www.singular.uni-kl.de.

\bibitem{GP08}
  G.~M. Greuel and G. Pfister.
  \newblock {\em A {S}ingular introduction to commutative algebra.}
  \newblock Extended edition.
  \newblock Springer, 2008.

\bibitem{Hue89}
J.~Huebschmann.
\newblock The mod-{$p$} cohomology rings of metacyclic groups.
\newblock {\em J. Pure Appl. Algebra}, 60(1):53--103, 1989.

\bibitem{KRo05}
M.~Kreuzer and L.~Robbiano.
\newblock {\em Computational commutative algebra. 2}.
\newblock Springer-Verlag, Berlin, 2005.

\bibitem{Sage}
W.~Stein et~al.
\newblock {\em {S}age {M}athematics {S}oftware}.
\newblock The Sage~Development Team, 2009.
\newblock See http://www.sagemath.org.

\bibitem{GAP}
{The {\sf GAP} Group}.
\newblock {\em {{\sf GAP} -- Groups, Algorithms and Programming, Version 4.4}}.
\newblock Available from http://www.gap-system.org, 2005.

\end{thebibliography}

\def\cprime{$'$}

\end{document}